\newtheorem{theorem}{Theorem}[section]
\numberwithin{equation}{section}
\begin{document}

\title{\vspace{-30pt}An introduction to the trapping experienced by biased random walk on the trace of biased random walk}
\author{D.~A.~Croydon\footnote{Department of Advanced Mathematical Sciences, Graduate School of Informatics, Kyoto University, Sakyo-ku, Kyoto 606--8501, Japan. \texttt{croydon@acs.i.kyoto-u.ac.jp}}}
\maketitle

\begin{abstract} We introduce and summarise results from the recent paper `Biased random walk on the trace of biased random walk on the trace of\dots' \cite{CH}, which was written jointly with M.~P.~Holmes (University of Melbourne). We also present additional discussion on some of the conjectures made in \cite{CH}. The content of this article is loosely based on the presentation given by the author at the Probability Symposium held at the Research Institute for Mathematical Sciences, Kyoto University in December 2018.
\end{abstract}

\section{Introduction}

Consider the random walk $X=(X_n)_{n\geq 0}$ on the integer lattice $\mathbb{Z}^d$ that is $\beta$-times more likely to jump in first-coordinate direction than in any other direction, i.e.\ its transition probabilities are proportional to the following weights (we will usually, but not always, assume $\beta>1$):
\begin{center}
\scalebox{0.3}{\includegraphics{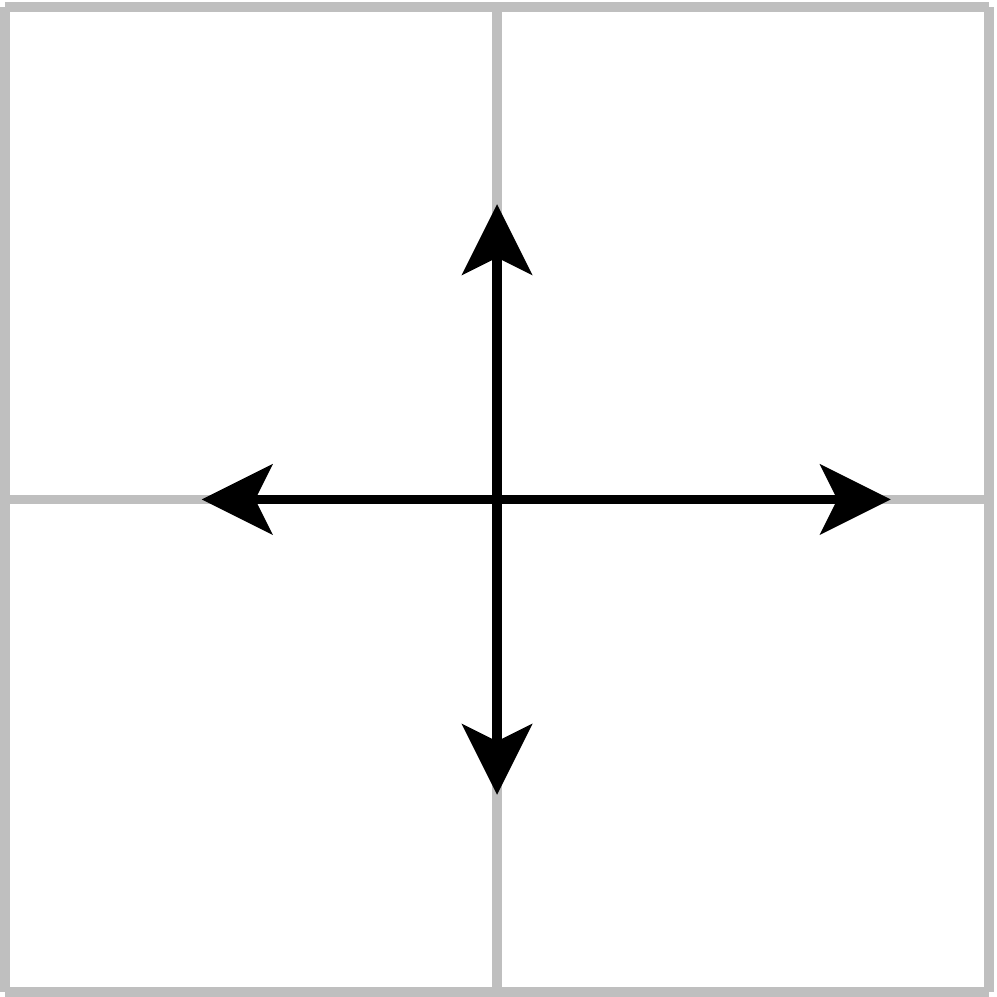}}
\put(-14,32){$\beta$}
\put(-70,32){$1$}
\put(-50,67){$1$}
\put(-50,12){$1$}
\end{center}
For this simple model, elementary results yield that at time $n$ the position of the walk is described as follows:
\[X_n = \frac{(\beta-1)n {e}_1}{\beta+2d-1}  + c_{\beta,d} N(0,I)n^{1/2}+o(n^{1/2}),\]
where $e_1$ is the unit vector in the first coordinate direction, $c_{\beta,d}$ is a constant depending on $\beta$ and $d$, $N(0,I)$ is a standard $d$-dimensional Gaussian vector, and the final term is $o(n^{1/2})$ in probability. (Of course, there are plenty of more refined statements that could be made, including functional ones.) In particular, as soon as the bias $\beta$ is taken to be non-trivial, then the process $X$ moves on a linear, or ballistic scale. Indeed, one might further note from the law of large numbers that, almost-surely,
\[\frac{X_n}{n}\rightarrow \frac{(\beta-1)e_1}{\beta+2d-1}.\]
As one would expect, this shows the walk has asymptotic velocity in the direction $e_1$, and its speed $v(\beta)=(\beta-1)/(\beta+2d-1)$ is monotonic in $\beta$, increasing from $0$ at $\beta=1$ to $1$ as $\beta\rightarrow\infty$.

From the point of view of mathematical physics, a natural question to ask is: How is the behaviour described in the previous paragraph affected by the introduction of disorder into the medium? In other words, what can we say if the underlying graph $\mathbb{Z}^d$ is replaced by a more irregular one? In the 1980s, physicists realised that for random graphs such as percolation clusters (about which we provide further discussion in Subsection \ref{percsec}) the answer to these questions would depend on the interplay between two effects resulting from an increase in drift. On the one hand, in certain `nice' parts of the environment the effect of additional drift would be as for the Euclidean lattice, enabling the walk to head in its direction of transience more quickly. On the other hand, there might also be sections of the environment, one might say `traps', such that if the random walk entered one of these, then a larger drift would make it more difficult to escape. This intuition was supported using heuristic arguments, via which it was suggested for biased random walks on percolation clusters there would be non-monotonicity of the speed as the bias increased, and sub-ballisticity (zero speed) in the strong bias regime \cite{BD}.

Mathematically, a phase transition between ballisticity and sub-ballisticity was first shown rigorously for the simpler model of biased random walk on supercritical Galton-Watson trees, and has since been confirmed in the supercritical percolation setting. In Section \ref{backgroundsec}, we briefly review the known results in these areas; a much more extensive survey is given in \cite{BAF}. Here and in \cite{BAF}, there is also discussion of biased random walk on critical Galton-Watson trees, studies of which are partially motivated by the aim of understanding the corresponding process on a critical percolation cluster. As we will discuss briefly in Subsection \ref{critpercsec} below, from the latter model there moreover arises an interest in studying biased random walks on random paths. Earlier results (surveyed in Subsection \ref{rworwsec} below) treated the case where the underlying random path had no preferred direction; in \cite{CH}, introducing the results of which is the focus of this article, attention is placed on the case when the underlying random path is directionally transient -- this is the model of `biased random walk on the trace of biased random walk (BRWBRW)' of the title. See Section \ref{brwbrwsec} for the part of the article discussing this model. We also highlight that for biased random walk on random paths, there is a strong connection with the model of one-dimensional random walk in random environment, and in Subsection \ref{conjsec} we use this parallel to formulate conjectures about more precise aspects of the behaviour of BRWBRW.

\section{Background}\label{backgroundsec}

In this section, we present a brief overview of previous results for biased random walks on random trees, percolation clusters and random paths.

\subsection{Supercritical (and subcritical) Galton-Watson trees}\label{suptreesec}

The easiest case in which to describe the trapping of biased random walk on a random graph is for a random tree. The case of supercritical Galton-Watson trees in particular was first explored in \cite{LPP}, and we start by briefly describing the developments for this model, as well as for the related subcritical model.

A Galton-Watson tree is a generated by a branching process: an initial ancestor gives birth to offspring according to an offspring distribution $(p_n)_{n\geq 0}$ to form the first generation, and then, given a particular generation, particles in this each give birth independently according to $(p_n)_{n\geq 0}$ to form the subsequent generation. In the supercritical case, corresponding to the offspring distribution having mean $m>1$, the process survives for all time with non-zero probability -- we condition on this event to give the underlying graph of interest in the next part of the discussion.

We then consider the random walk $X=(X_n)_{n\geq 0}$ which jumps along nearest neighbour edges, with probability of jumping to a particular neighbour away from the root (initial ancestor) being $\beta$-times more likely than jumping towards the root (when both are possible). For this process, it is clear to see that on the `backbone' -- the collection of vertices with a direct path to infinity, increasing the bias $\beta$ helps the walk escape. However, if $p_0>0$, then there exist dead-ends in the environment, and increasing the bias makes it more difficult to escape from these. The basic structure is shown in the left-hand figure below, and the right-hand figure shows the conjectured random walk asymptotic speed $v(\beta)=\lim_{n\rightarrow\infty}|X_n|/n$, where $|x|$ is the generation of $x$ (both figures are sourced from \cite{BAF}).
\begin{center}
\begin{tabular}{cc}
{\scalebox{0.3}{\includegraphics{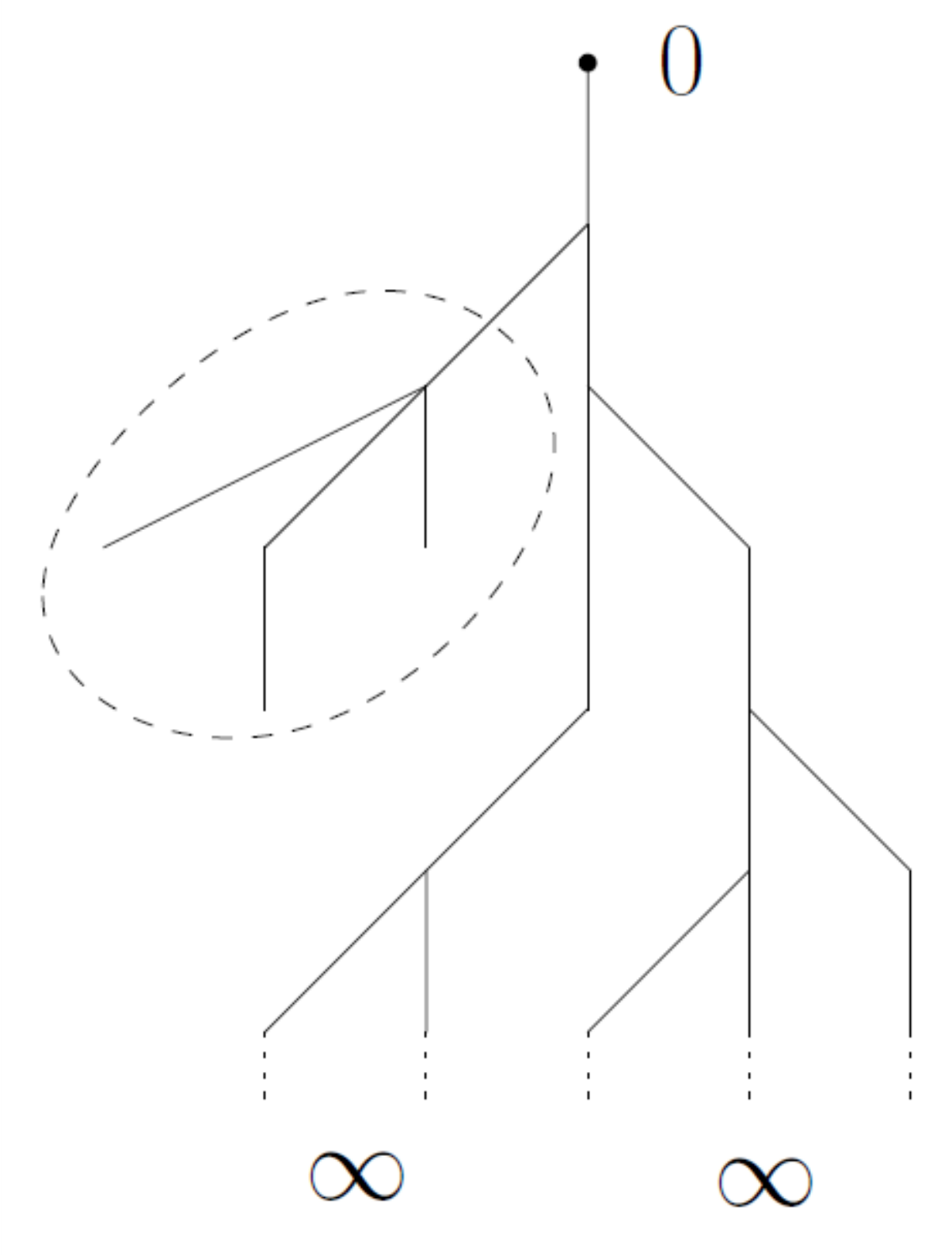}}}& \begin{tabular}{c}\vspace{-140pt} \hphantom{hhh} \\\scalebox{0.28}{\includegraphics{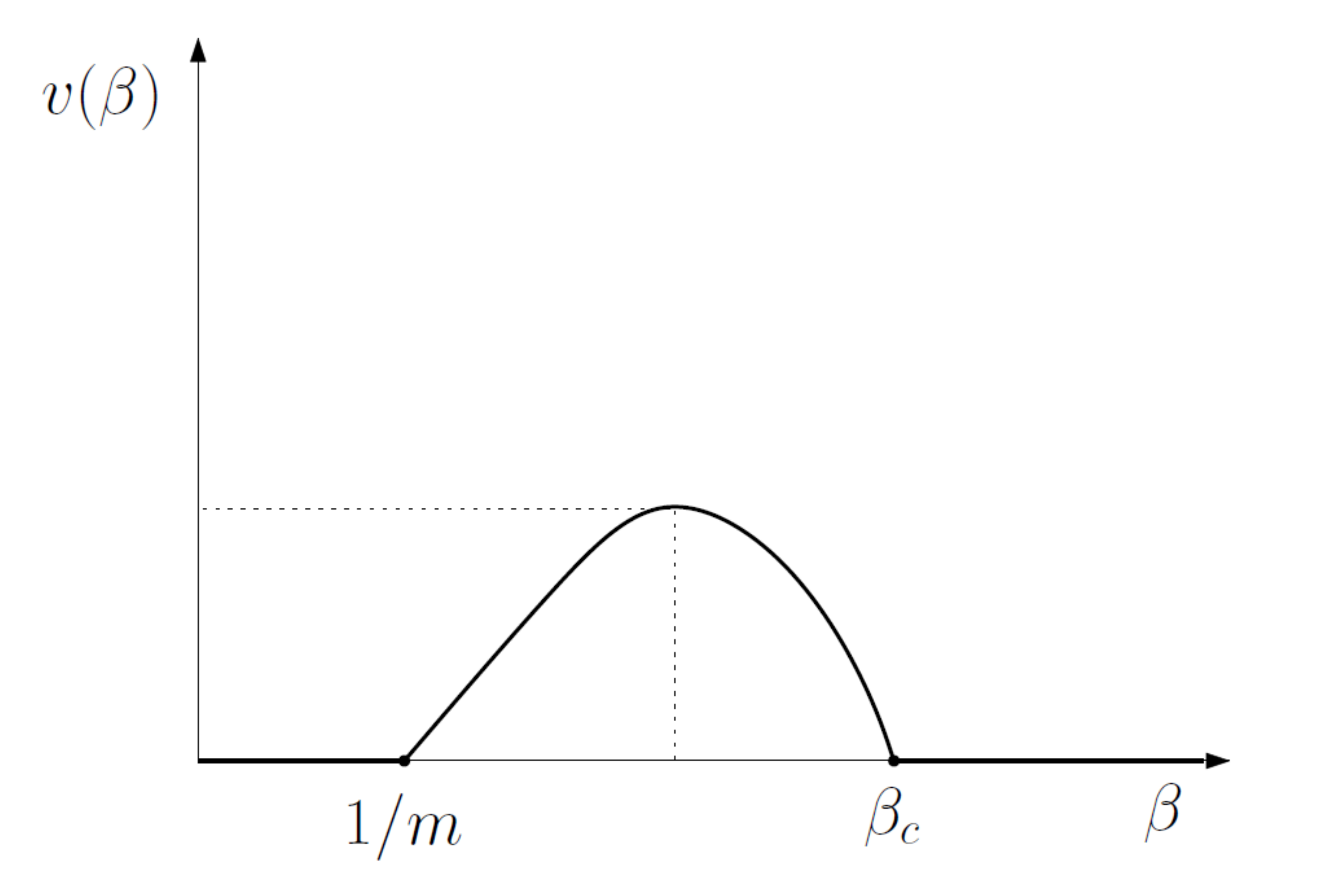}}\end{tabular}
\end{tabular}
\end{center}
Whilst the particular form of the speed is still unproven, the ballistic/sub-ballistic phase transition was established in \cite{LPP}, with $\beta_c$ being identified explicitly as $1/m_{trap}$, where $m_{trap}$ is the expected number of offspring of vertices in traps. (NB. For low biases, $\beta<1/m$, the walk is recurrent.) See also \cite{BFGH} for recent work concerning more detailed properties of $X$ in the sub-ballistic, $\beta>\beta_c$ regime, and \cite{Bowditch} for work on the related subcritical ($m<1$) model. In particular, the following schematic phase diagram sourced from \cite{Bowditch} illustrates how the exponent seen as the $n\rightarrow\infty$ limit of $\log|X_n|/\log n$ varies with the parameters of the model. (Note this figure shows $\mu$ in place of $m$.) Whilst we will not fully explain the figure here, the key quantities are: the bias $\beta$; $m_{trap}$ say, which is shown as $f'(q)$ in the below figure for the supercritical case, and is given by $m$ in the subcritical case; and the polynomial exponent $\alpha$ describing the tail regularity of the offspring distribution (specifically, meaning that it is chosen to fall into the domain of attraction of an $\alpha$-stable random variable). The quantity $\gamma$ is given by $-\log(m_{trap})/\log(\beta)$.
\begin{center}
{\scalebox{0.4}{\includegraphics{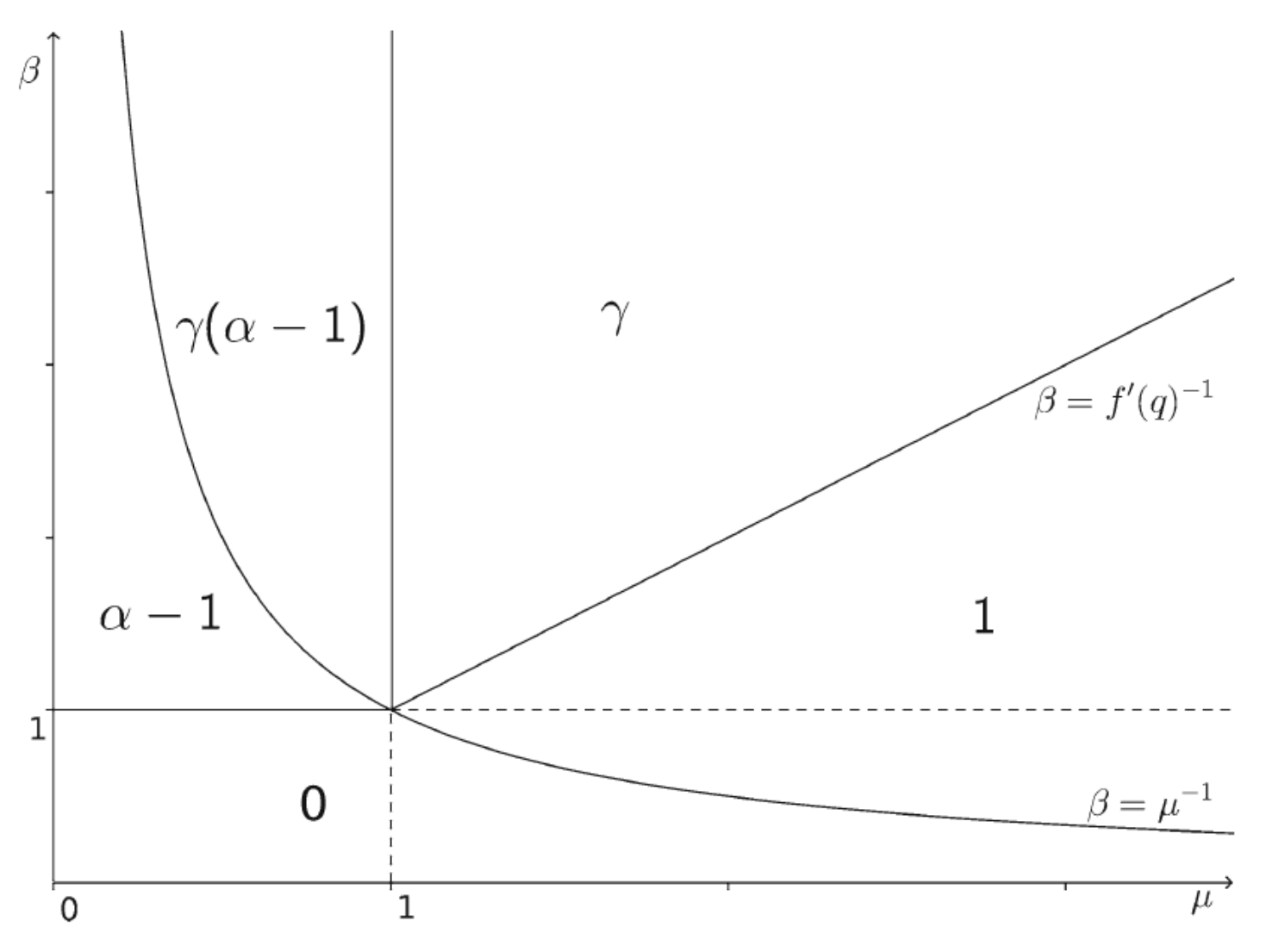}}}
\end{center}

\subsection{Supercritical percolation}\label{percsec}

A more challenging model is that of percolation, though the ballistic/sub-ballistic phase transition predicted by physicists has now been established rigourously, see \cite{BGP, FH, Sznitman}. To describe this more precisely, we recall that bond percolation on the integer lattice $\mathbb{Z}^d$ ($d\geq 2$) is the process whereby nearest neighbour edges are independently retained with some probability $p$, and discarded otherwise. Above the critical threshold for an infinite cluster to exist, one naturally asks about the asymptotic behaviour of the (biased) random walk on this graph (which is unique):
\vspace{-30pt}
\begin{center}
\reflectbox{\scalebox{0.4}{\includegraphics{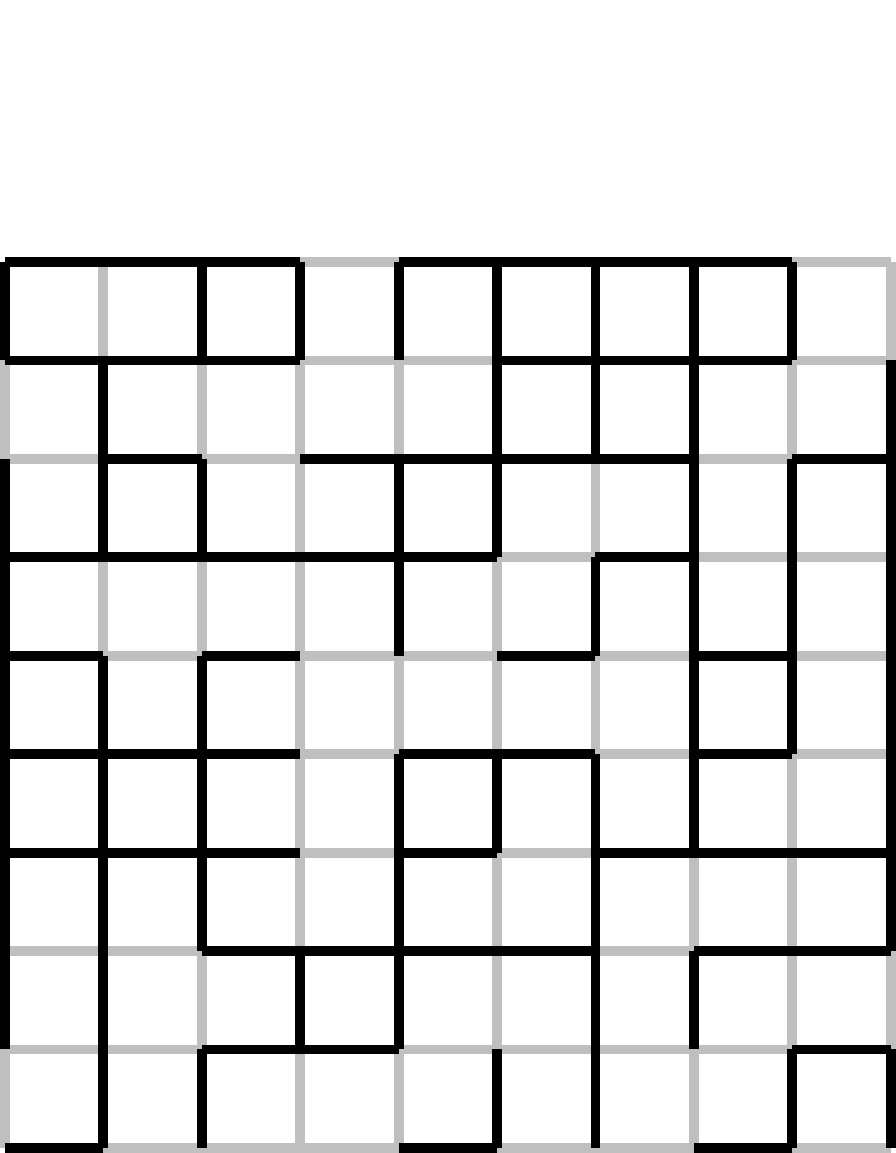}}}
\end{center}
As in the Euclidean setting described at the start of the article, we suppose the associated biased random walk jumps along edges of the infinite cluster, but is $\beta$-times more likely to jump in the first coordinate direction than in any other direction, when this is a possibility. If $\beta=1$, then the random walk is behaves as it does on the whole of $\mathbb{Z}^d$, in that it is has Gaussian fluctuations for almost-every realisation of the environment \cite{BB,MP,SS}. Moreover, if $\beta>1$, then the walk is directionally transient. However, in this latter case there exists a $\beta_c\in(1,\infty)$ such that if $\beta<\beta_c$, then the biased random walk has positive speed, but if $\beta>\beta_c$, then the biased random walk has zero speed \cite{BGP,FH,Sznitman}.
As in the case of Galton-Watson trees, confirming this ballistic/sub-ballistic phase transition depended on providing a careful analysis of the dead-ends in the environment from which the walk has to back-track to escape.

\subsection{Critical percolation}\label{critpercsec}

In early work concerning biased random walk on percolation close to criticality, physicists identified two types of trapping that might occur \cite{BD}:
  \begin{center}
   \scalebox{0.45}{\includegraphics{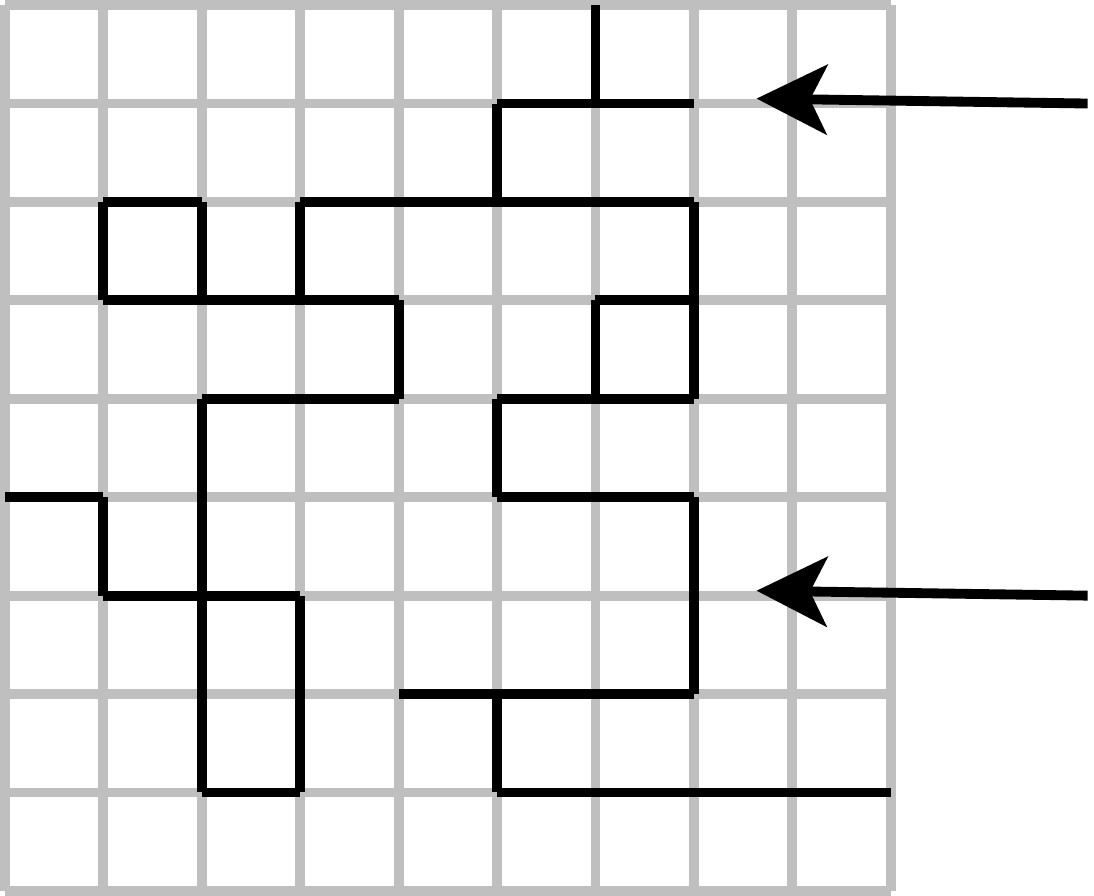}}
  \end{center}

\vspace{-122pt}
\hspace{285pt}`Trapping in branches'

\vspace{50pt}
\hspace{285pt}`Traps along the backbone'

\vspace{40pt}

\noindent
In particular, trapping in branches captures the idea of dead-ends already described in the previous two subsections. The additional phenomena of traps along the backbone results from the relative sparseness of paths in near critical percolation clusters, as well as their spatial shape, which means that the biased random will spend longer in certain parts of the path than others. Of course, both types of trap ask that the random walk fights against the bias, and to understand the biased random walk's asymptotic properties, one must estimate the time it takes to overcome these difficult regions. Whilst the case of biased random walk on the critical percolation cluster is still unexplored rigourously, the above considerations do suggest an interest in the following models for biased random walk:
\begin{itemize}
  \item biased random walk on a critical Galton-Watson tree conditioned to survive, so as to capture the tree-like structure of high-dimensional critical percolation clusters (as originally identified in \cite{HS});
  \item biased random walk on the trace of a random walk in high dimensions, so as to capture the effect of trapping along the backbone, which is known to scale like a simple symmetric random walk (in this high dimensional case), see \cite{HHHM}.
\end{itemize}
These two models are discussed in the next two subsections, and the second also partially motivates the study of biased random walk on a biased random walk, as is pursued in the subsequent section. We further note some interesting recent work regarding biased random walk on spatially embedded critical trees from \cite{Andrio}, in which a possible scaling limit for biased random walk on a critical percolation cluster in high dimensions in a weak bias regime is identified. The latter work also suggests an approach for studying localisation properties of biased random walk on critical percolation in high dimensions for more general biases.

\subsection{Critical Galton-Watson trees}

For a Galton-Watson tree, as introduced in Subsection \ref{suptreesec}, with an offspring distribution that is critical, i.e.\ with mean $m=1$, and which is in the domain of attraction of $\alpha$-stable random variable, $\alpha\in(1,2]$, the associated biased random walk $X=(X_n)_{n\geq 0}$ was studied in \cite{CFK}. In this regime, the tree is almost-surely finite, but there is a well-understood limiting procedure via which one can condition it to survive for all time. With this as the underlying graph, it is possible to show that
\[\left(\frac{\left|\pi(X_{e^{nt}})\right|\log\beta}{(\alpha-1)n}\right)_{t\geq 0}\]
converges in distribution to a certain stochastic process that is independent of the parameters in the model, where $\pi(x)$ is the closest backbone vertex to $x$ (i.e.\ the result ignores the depth of the current trap). Not only does the result makes clear the dependence of $\alpha$ and $\beta$ on the scaling, but also shows the biased random walk $X$ is always sub-ballistic for any $\beta>1$, and in fact moves extremely slowly -- taking an exponential amount of time to escape from a ball (cf.\ the supercritical and subcritical cases, where the analogous escape rate is polynomial). The key computation in this case is checking that the height of a trap $H$ satisfies $\mathbf{P}\left(H\geq x\right)\sim\frac{1}{(\alpha-1)x}$, rather than the exponential tail seen in the non-critical cases. Indeed, the result then follows by noting that the escape time behaves like $\beta^H$ and thus has slowly varying tail $\frac{\log\beta}{(\alpha-1)\log x}$, and so the time to escape along the backbone, which behaves like an i.i.d.\ sum of such random variables, grows exponentially with distance. In fact, the extremal nature of the trapping distribution means that there is also a strong localisation effect in the deep traps, meaning that it is possible to predict with high certainty where the walk is at a particular time.

\subsection{Trace of symmetric random walk in high dimensions}\label{rworwsec}

As the final piece of background, we consider biased random walk on the trace of symmetric random walk. Specifically, let $X^{(0)}$ be simple symmetric random walk on $\mathbb{Z}^d$, $d\geq 5$, which we recall is a transient process. Let $\mathcal{G}^{(0)}=(V^{(0)},E^{(0)})$ be graph with vertex set and edge set given by
\begin{equation}\label{v0e0}
V^{(0)}:=\left\{X^{(0)}_n:n \in \mathbb{Z}_+\right\},\qquad E^{(0)}:=\left\{\left\{X^{(0)}_n,X^{(0)}_{n+1}\right\}:\:n \in \mathbb{Z}_+\right\},
\end{equation}
respectively. Conditional on $\mathcal{G}^{(0)}$, we then let $X^{(1)}$ be the $\beta$-biased random walk on this graph, defined similarly to the biased random walk on a percolation cluster. As in the previous section, the trapping in this case is very strong, and it is possible to check the following localisation result:
\[\mathbf{P}\left(\left|\frac{X^{(1)}_n}{\log n}-L_n\right|>\varepsilon\right)\rightarrow 0,\]
where $L_n$ is a $\mathcal{G}^{(0)}$-measurable random variable that converges in distribution to a random variable $L_{\beta}$, and the distribution of $\log(\beta)L_\beta$ is independent of $\beta$. The argument is quite different in flavour to the tree/percolation cases in that it does not involve comparing with an i.i.d.\ sum, but rather using techniques developed for one-dimensional random walk in a random environment (see \cite{Zeitouni} for background). In particular, it turns out that the process behaves essentially like a random walk in a random potential, where the random potential is closely approximated by $-\log(\beta)(X^{(0)}\cdot e_1)$. It is known from the one-dimensional case that in the regime under consideration, the walk spends most of its time in `valleys' of the potential, whereby to escape from a valley of depth $h$ it will take a time $\beta^h$ to escape. An example of a valley that leads to the localisation result presented above is shown in the following sketch (with the bias direction $e_1$ pointing upwards).
\begin{center}
   \scalebox{0.45}{\includegraphics{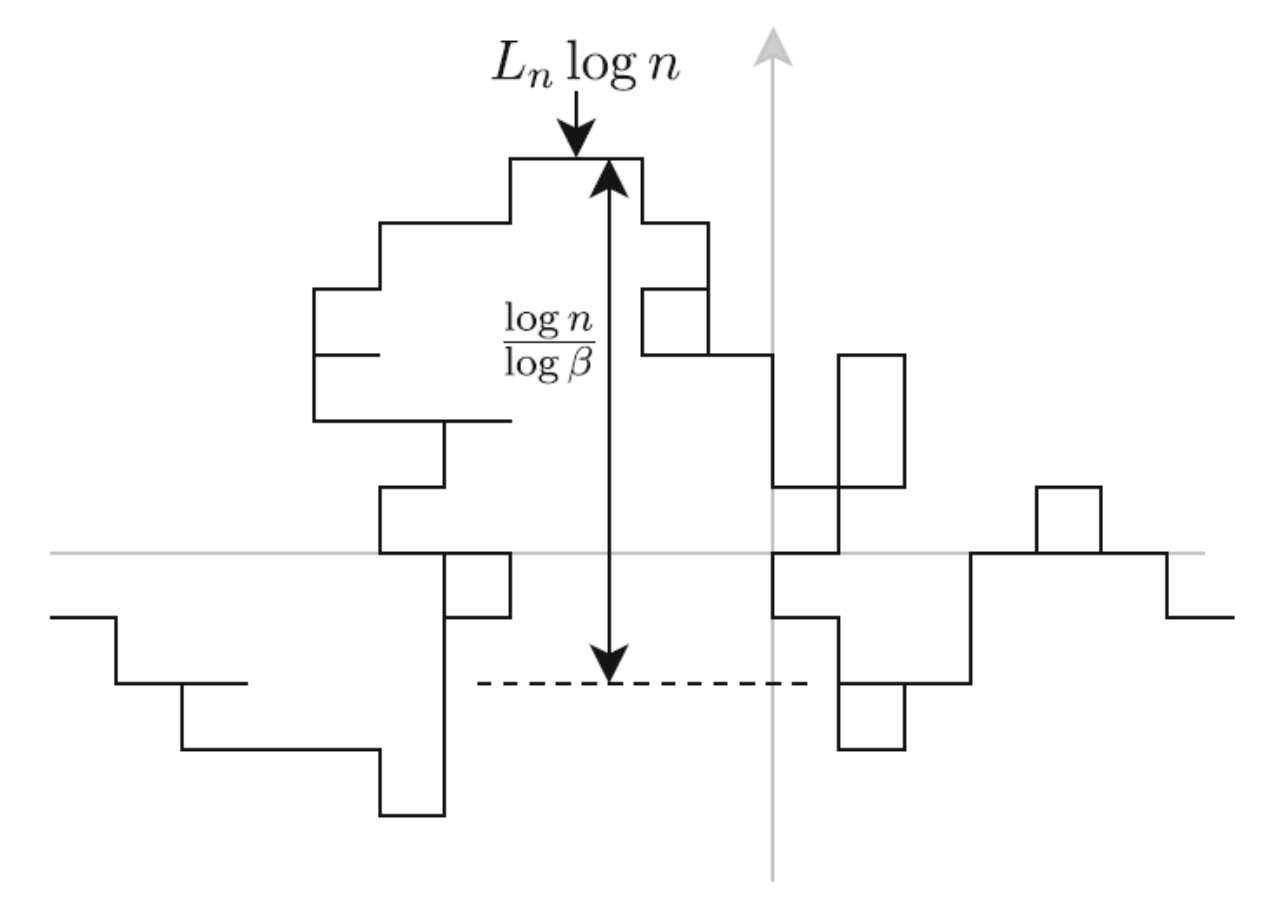}}
  \end{center}

\section{Biased random walk on the trace of biased random walk}\label{brwbrwsec}

We now come to the main focus of this article -- the biased random walk on the trace of biased random walk (BRWBRW). This model is introduced precisely in Subsection \ref{modelsec}. In Subsections \ref{rectransec} and \ref{ballsubsec} we present our results concerning criteria for recurrence/transience and for ballisticity/sub-ballisticity. These are qualitatively similar to those known to hold for Galton-Watson trees and percolation clusters, and perhaps the main interest is in the explicit description of how trapping arises for this model. Finally, in Subsection \ref{conjsec}, we conclude the article with some conjectures concerning more detailed behaviour than is proven in \cite{CH}.

\subsection{The model}\label{modelsec}

The underlying graph of interest in this section will be generated by a random walk $X^{(0)}=(X^{(0)}_n)_{n\geq 0}$ on $\mathbb{Z}^d$ that has transition distribution $\mathbf{p}^{(0)}$ supported on the standard basis vectors $\{\pm e_j:j \in \{1,2,\dots,d\}\}$. We always suppose that $X^{(0)}_0=0$, and that the process has a drift with strictly positive first coordinate, i.e.\
\[\delta^{(0)}:=\sum_{e}e\mathbf{p}^{(0)}(e),\]
satisfies $\delta^{(0)}\cdot e_1>0$. (Only that the drift is non-zero, and not the particular direction of drift, is important in what follows.) We then let $\mathcal{G}^{(0)}=(V^{(0)},E^{(0)})$ be the graph with vertex set and edge set given by \eqref{v0e0}; this is the trace of biased random walk. Conditional on $\mathcal{G}^{(0)}$, we then suppose $X^{(1)}$ has transition probabilities
\[P^{\mathcal{G}^{(0)}}\left(X^{(1)}_{n+1}=x+e\:|\:X^{(1)}_n=x\right)=
\frac{\mathbf{p}^{(1)}(e)}{\sum_{e':(x,x+e')\in E^{(0)}}\mathbf{p}^{(1)}(e')}\]
for $(x,x+e)\in E^{(0)}$, where $\mathbf{p}^{(1)}$ is also a probability measure supported on the standard basis vectors; $X^{(1)}$ is the BRWBRW. The following sketch shows an initial part of the trace of $X^{(1)}$ (heavier lines) on $\mathcal{G}^{(0)}$ (lighter lines).
\begin{center}
   \scalebox{1.2}{\includegraphics{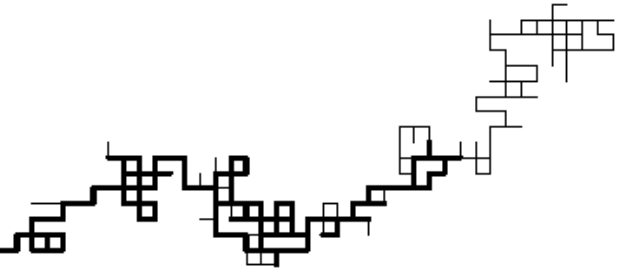}}
\end{center}
Importantly, to ensure that $X^{(1)}$ is well-defined, we assume that $p^{(1)}(e)>0$ for all $e \in \{\pm e_j:j \in \{1,2,\dots,d\}\}$. This further means that the transition probabilities of $X^{(1)}$ can alternatively be written:
\[P^{\mathcal{G}^{(0)}}\left(X^{(1)}_{n+1}=x+e\:|\:X^{(1)}_n=x\right)=\frac{c^{(1)}(x,x+e)}{\sum_{e':(x,x+e')\in E^{(0)}}c^{(1)}(x,x+e')},\]
for $(x,x+e)\in E^{(0)}$, where `edge conductances' are given by
\begin{equation}\label{conddef}
c^{(1)}(x,y)=\left(\prod_{j=1}^d \mathbf{p}^{(1)}(-e_j)^{|y_j-x_j|}\right)\beta_{(1)}^{(x\vee y)\cdot\ell^{(1)}}
\end{equation}
with $\ell^{(1)}$ the unit vector parallel to, and $\log\beta^{(1)}$ the norm of
\[\left(\log\left(\frac{\mathbf{p}^{(1)}(e_j)}{\mathbf{p}^{(1)}(-e_j)}\right)\right)_{j=1}^d.\]
NB. $\beta_{(1)}\geq1$, with equality only in the balanced case, i.e.\ when $\mathbf{p}^{(1)}(e_j)=\mathbf{p}^{(1)}(-e_j)$ for all $j\in\{1,2,\dots,d\}$.

\subsection{Recurrence/transience}\label{rectransec}

The drift associated with $\mathbf{p}^{(1)}$ on $\mathbb{Z}^d$ is given by
\[\delta^{(1)}:=\sum_{e}e\mathbf{p}^{(1)}(e),\]
and it might be a natural first guess that if this is oriented in the same direction as the drift associated with $\mathbf{p}^{(0)}$, i.e.\ $\delta^{(0)}\cdot\delta^{(1)}>0$, then $X^{(1)}$ would be transient (drift to infinity with probability one), and that it would be recurrent (return to zero infinitely often with probability one) otherwise. However, it turns out this is not the right criteria for recurrence/transience, as the next result illustrates. In particular, one should consider instead the direction of `conductance drift' $\ell^{(1)}$, as defined in the previous section.

\begin{theorem} The graph $\mathcal{G}^{(0)}$ is almost-surely transient for $X^{(1)}$ if $\delta^{(0)}\cdot\ell^{(1)}>0$, and is almost-surely recurrent for $X^{(1)}$ otherwise.
\end{theorem}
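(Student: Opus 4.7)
The process $X^{(1)}$ is a reversible random walk on $\mathcal{G}^{(0)}$ with edge conductances $c^{(1)}$ as in \eqref{conddef}, so recurrence and transience are determined by whether the effective resistance from $0$ to the end of $\mathcal{G}^{(0)}$ is infinite or finite. The essential algebraic feature of (\ref{conddef}) is that for any nearest-neighbour edge $\{x,x+e\}\in E^{(0)}$ one has $c^{(1)}(x,x+e)=C(e)\beta_{(1)}^{x\cdot\ell^{(1)}}$ with $C(e)\in(0,\infty)$ depending only on $e$; thus conductances grow or decay geometrically in the $\ell^{(1)}$-coordinate, and this is what drives the dichotomy.

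For the transient case $\delta^{(0)}\cdot\ell^{(1)}>0$, I would construct a unit flow of finite energy via Thomson's principle. Let $\widetilde{X}^{(0)}=(\widetilde{X}^{(0)}_k)_{k\geq 0}$ denote the forward loop-erasure of $X^{(0)}$, which is a self-avoiding infinite path in $\mathcal{G}^{(0)}$ with $\widetilde{X}^{(0)}_k=X^{(0)}_{\tau_k}$ for some $\tau_k\geq k$. Sending one unit of flow along this path, the energy becomes
\[\sum_{k\geq 0}\frac{1}{c^{(1)}(\widetilde{X}^{(0)}_k,\widetilde{X}^{(0)}_{k+1})}\leq C'\sum_{k\geq 0}\beta_{(1)}^{-X^{(0)}_{\tau_k}\cdot\ell^{(1)}},\]
and the strong law for $X^{(0)}$ gives $X^{(0)}_{\tau_k}\cdot\ell^{(1)}\geq\tfrac12(\delta^{(0)}\cdot\ell^{(1)})k$ eventually, so the series converges geometrically and $X^{(1)}$ is transient.

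For the recurrent case $\delta^{(0)}\cdot\ell^{(1)}\leq 0$, I would apply the Nash--Williams inequality with cutsets sited at the cut times of $X^{(0)}$. Recall that $t$ is a cut time of $X^{(0)}$ if $\{X^{(0)}_n:n\leq t\}\cap\{X^{(0)}_n:n>t\}=\emptyset$; at such a $t$ the single edge $\{X^{(0)}_t,X^{(0)}_{t+1}\}$ is a bridge separating $0$ from the end of $\mathcal{G}^{(0)}$. Since $X^{(0)}$ has ballistic drift $\delta^{(0)}$, cut times have positive asymptotic density almost surely (a classical fact coming from ladder epochs of the projection onto $\delta^{(0)}$), and Nash--Williams yields
\[R_{\mathrm{eff}}(0\leftrightarrow\infty)\geq c\sum_{t\ \mathrm{cut}}\beta_{(1)}^{-X^{(0)}_t\cdot\ell^{(1)}}.\]
If $\delta^{(0)}\cdot\ell^{(1)}<0$ the summands grow exponentially; if $\ell^{(1)}=0$ every summand is a positive constant; and if $\delta^{(0)}\cdot\ell^{(1)}=0$ with $\ell^{(1)}\neq 0$ the scalar process $X^{(0)}\cdot\ell^{(1)}$ is a centred random walk on $\mathbb{R}$ for which a positive fraction of times (and, through independence of the cut structure from the orthogonal fluctuations, of cut times) satisfy $X^{(0)}_t\cdot\ell^{(1)}\leq 0$, so the corresponding summands are $\geq 1$. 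In all three sub-cases the series diverges, giving recurrence.

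The main obstacle, as is typical for arguments of this kind, is the borderline sub-case $\delta^{(0)}\cdot\ell^{(1)}=0$ with $\ell^{(1)}\neq 0$: making precise the claim that sufficiently many cut times have non-positive $\ell^{(1)}$-coordinate requires combining the classical density results for cut times (which use only the drift direction) with the sign distribution of the centred projection $X^{(0)}\cdot\ell^{(1)}$, essentially an ergodic-type input. The transient direction is by comparison straightforward once the loop-erased flow and the strong law are in place, and the strict $\delta^{(0)}\cdot\ell^{(1)}<0$ sub-case of recurrence is immediate from Nash--Williams.
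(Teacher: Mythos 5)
Your strategy is the same as the paper's: reduce to an effective-resistance computation for the network $(\mathcal{G}^{(0)},c^{(1)})$ and use the path-like structure to compare the resistance to infinity with a series sum of terms $\beta_{(1)}^{-X^{(0)}_n\cdot\ell^{(1)}}$. The loop-erased unit flow (Thomson, upper bound on resistance) and Nash--Williams at cut times (lower bound) are precisely the natural way to substantiate the paper's assertion that ``the resistance to infinity is of the same order as the sum of edge resistances'', and your handling of the cases $\delta^{(0)}\cdot\ell^{(1)}>0$, $\delta^{(0)}\cdot\ell^{(1)}<0$ and the balanced case $\beta_{(1)}=1$ is sound (note only that positive density of cut times is more than you need for recurrence; infinitely many suffice).

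The step that does not hold up is the borderline case $\delta^{(0)}\cdot\ell^{(1)}=0$ with $\beta_{(1)}>1$, which you rightly flag as the delicate one. First, ``a positive fraction of times satisfy $X^{(0)}_t\cdot\ell^{(1)}\leq 0$'' is not an almost-sure statement for a centred one-dimensional walk: the occupation fraction of the negative half-line obeys an arcsine law, and its liminf is zero. Fortunately you only need infinitely many cut times $t$ with $X^{(0)}_t\cdot\ell^{(1)}$ bounded above. Second, and more seriously, the proposed mechanism -- independence of the cut structure from the ``orthogonal'' fluctuations -- is false in general: a single step of $X^{(0)}$ couples its component along $\delta^{(0)}$ with its component along $\ell^{(1)}$, and the cut-time structure depends on the whole walk, not just its projection onto the drift. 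The clean repair uses the regeneration structure you already invoke: if $T_1<T_2<\cdots$ denote the cut times, then the increments $X^{(0)}_{T_{i+1}}-X^{(0)}_{T_i}$ are i.i.d.\ for $i\geq 1$ with mean $\mathbf{E}[T_2-T_1]\,\delta^{(0)}$ (by the law of large numbers and $T_n/n\to\mathbf{E}[T_2-T_1]<\infty$), so $(X^{(0)}_{T_i}\cdot\ell^{(1)})_{i\geq 1}$ is itself a centred one-dimensional random walk. Whether this walk is degenerate (constant) or not (in which case its liminf is $-\infty$), infinitely many of the Nash--Williams summands $\beta_{(1)}^{-X^{(0)}_{T_i}\cdot\ell^{(1)}}$ are bounded below by a positive constant, and the resistance diverges. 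With that substitution the argument is complete and matches the paper's intended proof.
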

\begin{proof}[Proof sketch] Using the fundamental connections between electrical networks and random walks (see, for example, \cite{LP}), recurrence is equivalent to having an unbounded resistance between 0 and infinity in the graph $\mathcal{G}^{(0)}$, when this is equipped with edge conductances as at \eqref{conddef}. Given the path-like nature of the graph, it is possible to check that the resistance to infinity is of the same order as the sum of edge resistances, i.e.\ the order of
\[\sum_{n=0}^{\infty}\beta_{(1)}^{-X_n^{(0)}\cdot\ell^{(1)}}\approx \sum_{n=0}^{\infty}\beta_{(1)}^{-n\delta^{(0)}\cdot\ell^{(1)}},\]
where the second approximation is obtained from the law of large numbers for $X^{(0)}$. The result follows.
\end{proof}

To confirm it is not possible to replace $\ell^{(1)}$ by $\delta^{(1)}$ in the previous result, we note that it is possible to find examples where the drifts are oriented as follows:
\begin{center}
\scalebox{0.5}{\scalebox{0.7}{\scalebox{0.6}{\includegraphics{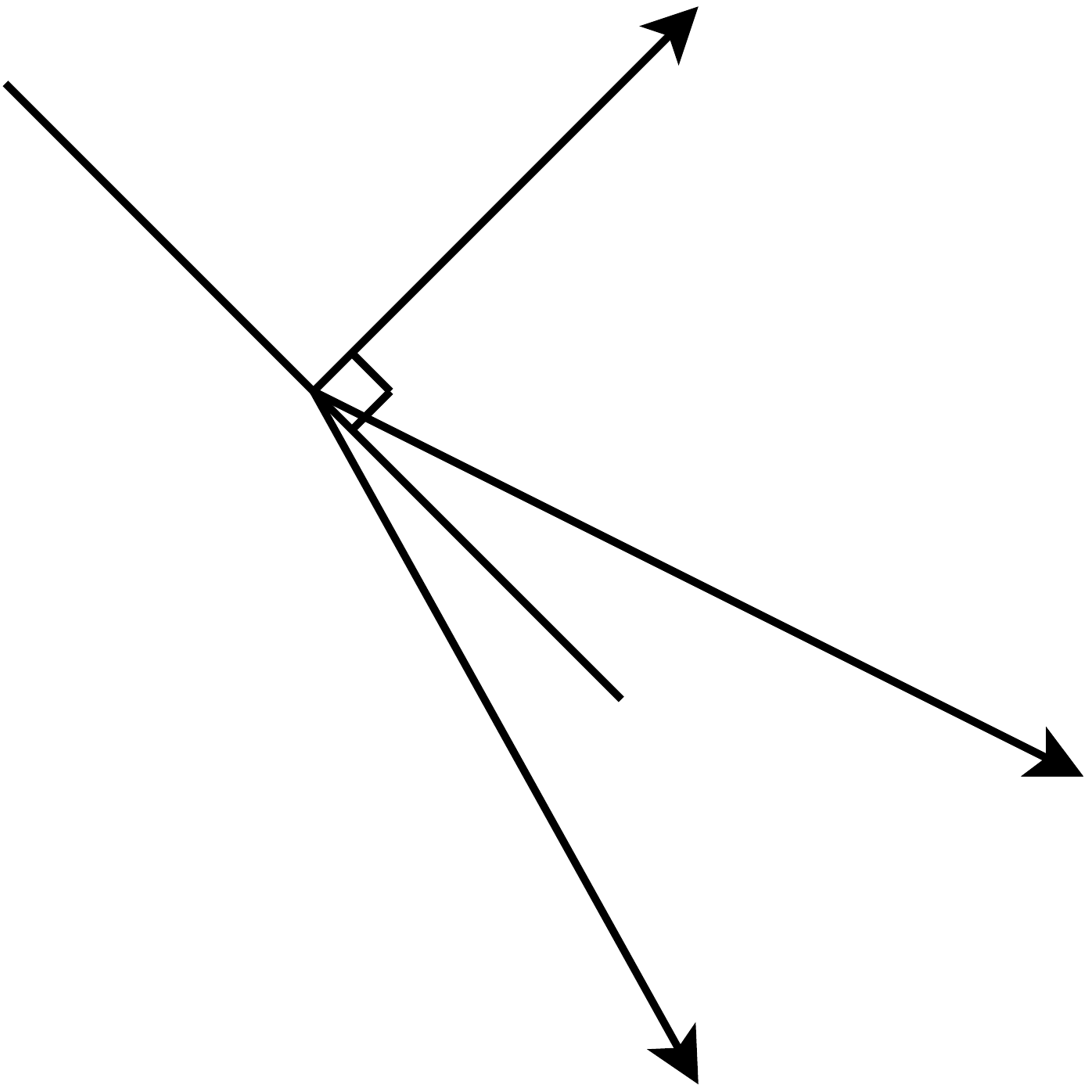}}
\put(-80,240){\scalebox{2}{$\delta^{(0)}$}}
\put(-80,0){\scalebox{2}{$\delta^{(1)}$}}
\put(10,65){\scalebox{2}{$\ell^{(1)}$}}}}
\end{center}
So, the walk $X{(1)}$ is transient (and can even be ballistic), even though $\delta^{(0)}\cdot\delta^{(1)}<0$. And, likewise, one can also see the opposite effect, whereby the walk $X{(1)}$ is recurrent, even through $\delta^{(0)}\cdot\delta^{(1)}>0$. We note these effects even hold for a deterministic path, and similarly counterintuitive results have also been observed for one-dimensional random walk in random environment (see \cite{Solomon}).

\subsection{Ballistic/Sub-ballistic phase transition}\label{ballsubsec}

We next describe the main results of \cite{CH}, which concern the ballistic/sub-ballistic phase transition for $X^{(1)}$. Assuming we are in the transient regime, i.e.\ $\delta^{(0)}\cdot\ell^{(1)}>0$, we are able to prove the following. The proof of part (a) depends on standard regeneration arguments, and so the main novelty is in establishing part (b).

\begin{theorem}\label{subbthm} (a) [Limiting velocities] Almost-surely, the following limit exists:
\[v^{(1)}:=\lim_{n \rightarrow \infty} n^{-1}X^{(1)}_n=v\delta^{(0)}.\]
for some deterministic $v\in[0,\infty)$.\\
(b) [Ballistic/Sub-ballistic phase transition] There exists an $\alpha_{(1)}\in(1,\infty)$ such that:\\
(i) If $\beta_{(1)}<\alpha_{(1)}$, then $X^{(1)}$ is ballistic, i.e.\ $v^{(1)}\neq 0$.\\
(ii) If $\beta_{(1)}>\alpha_{(1)}$,  then $X^{(1)}$ is sub-ballistic, i.e.\ $v^{(1)}=0$.
\end{theorem}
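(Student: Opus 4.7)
The plan is to prove (a) by adapting the standard regeneration-time machinery for ballistic random walks, and to prove (b) by understanding how the hitting-time statistics of $X^{(1)}$ between successive regeneration levels behave as a function of the depth of the traps that $X^{(0)}$ creates by back-tracking against the conductance direction $\ell^{(1)}$. Throughout, I would work in the transient regime $\delta^{(0)}\cdot\ell^{(1)}>0$ and use the electrical interpretation of the conductances in \eqref{conddef}.

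For part (a), I would first build a sequence of regeneration times for $X^{(0)}$ in the direction $\delta^{(0)}$ in the usual way: these are times $\sigma_k$ at which $X^{(0)}$ hits a new maximum of $X^{(0)}\cdot\delta^{(0)}$ that it never subsequently undershoots. Because $X^{(0)}$ is a genuinely biased i.i.d. step walk, it is standard that the $\sigma_k$ are a.s.\ finite, the increments are i.i.d., and $\mathbf{E}[\sigma_1]<\infty$, so the slab of trace $\mathcal{G}^{(0)}$ between $\sigma_k$ and $\sigma_{k+1}$ is disjoint from the trace up to $\sigma_k$. On this trace-level skeleton I would then define regeneration times $T_k$ for $X^{(1)}$ as the first time $X^{(1)}$ sits at a vertex $X^{(0)}_{\sigma_{k}}$ and never again visits any vertex of $\{X^{(0)}_m:m\leq \sigma_k\}$. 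The independence structure of the slabs of $\mathcal{G}^{(0)}$, together with the fact that $X^{(1)}$ is directionally transient in $\ell^{(1)}$ by the previous theorem, implies the $T_k$ are a.s.\ finite and the pairs $(T_{k+1}-T_k,X^{(1)}_{T_{k+1}}-X^{(1)}_{T_k})$ are i.i.d.\ under the conditional law given the initial regeneration. The strong law then gives
\[\frac{X^{(1)}_n}{n}\longrightarrow \frac{\mathbf{E}[X^{(1)}_{T_2}-X^{(1)}_{T_1}]}{\mathbf{E}[T_2-T_1]},\]
with the convention that the right-hand side is zero if the denominator is infinite. Since each displacement $X^{(1)}_{T_{k+1}}-X^{(1)}_{T_k}$ is, after time-rescaling, essentially a shifted copy of $X^{(0)}_{\sigma_{k+1}}-X^{(0)}_{\sigma_k}$, its expectation is a positive multiple of $\delta^{(0)}$, yielding $v^{(1)}=v\delta^{(0)}$.

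For part (b), the heart of the argument is to analyse $\mathbf{E}[T_2-T_1]$ as a function of $\beta_{(1)}$ and to locate the critical value $\alpha_{(1)}$ at which it transitions from finite to infinite. The key object is the maximal backward excursion of $X^{(0)}$ in direction $\ell^{(1)}$ inside a single regeneration slab: let $D$ denote the depth of the deepest such excursion between the slab endpoints, measured as the maximum over the slab of $(X^{(0)}_{\sigma_1}\cdot\ell^{(1)} - X^{(0)}_m\cdot\ell^{(1)})^+$. Using the conductance formula \eqref{conddef} and the commute-time (or effective-resistance) identity for reversible Markov chains, one can show that the expected time for $X^{(1)}$ to cross a slab of depth $D$ is of order $\beta_{(1)}^{D}$, up to polynomial prefactors coming from the width of the slab in the transverse directions. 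By an excursion analysis for the biased walk $X^{(0)}$, the depth $D$ has an exponential tail $\mathbf{P}(D\geq h)\asymp\rho^h$ for an explicit $\rho=\rho(\mathbf{p}^{(0)},\ell^{(1)})\in(0,1)$ given by the probability that the time-reversal of $X^{(0)}$ ever attains level one in direction $\ell^{(1)}$. Consequently the regeneration time has a power-law tail
\[\mathbf{P}\bigl(T_2-T_1\geq t\bigr)\asymp t^{-\log(1/\rho)/\log\beta_{(1)}},\]
so setting $\alpha_{(1)}:=1/\rho$ gives finite mean precisely when $\beta_{(1)}<\alpha_{(1)}$. Combined with the velocity formula from (a), this yields (i) and (ii).

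The main obstacle I expect is step (b): although the one-dimensional RWRE intuition is clear, turning the heuristic ``escape time $\approx\beta_{(1)}^D$'' into matching upper and lower bounds requires dealing with the non-tree geometry of $\mathcal{G}^{(0)}$, namely the self-intersections of $X^{(0)}$. For the upper bound on $\mathbf{E}[T_2-T_1]$ I would control effective resistance across the slab via a careful choice of flow supported on $\mathcal{G}^{(0)}$ that follows the ``highest'' traversal of the slab, and then invoke the commute-time identity together with the volume bound implied by the law of large numbers for $X^{(0)}$. For the lower bound I would exhibit, within each slab of depth $D$, a single edge of very small conductance whose removal disconnects the entry and exit points, giving a matching lower bound on the commute time. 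Matching these bounds and verifying the tail asymptotics for $D$ up to constants sharp enough to identify the exponent is the delicate portion; everything else should reduce to standard renewal and ergodic inputs.
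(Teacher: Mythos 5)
Your overall architecture---regeneration times for part (a), and for part (b) a tail analysis of the inter-regeneration time governed by the depth $D$ of the deepest back-track of $X^{(0)}$ against $\ell^{(1)}$, with escape time of order $\beta_{(1)}^{D}$---is the same as the paper's. Two points need attention, one minor and one essential. The minor one: your identification $\alpha_{(1)}=1/\rho$ with $\rho$ ``the probability that the time-reversal of $X^{(0)}$ ever attains level one in direction $\ell^{(1)}$'' is not correct in general, because $X^{(0)}\cdot\ell^{(1)}$ is not a nearest-neighbour walk on a one-dimensional lattice (its increments take the values $\pm\ell^{(1)}_j$, $j=1,\dots,d$). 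The correct quantity is the Cram\'er exponent: $\alpha_{(1)}=e^{t_{(1)}}$ where $t_{(1)}>0$ is the unique positive root of $\varphi_{(1)}(t)=\mathbf{E}[\exp\{-tX^{(0)}_1\cdot\ell^{(1)}\}]=1$, equivalently the exponential decay rate in \eqref{backtrack}; your $\rho$ computes this rate only in special lattice cases.

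The essential gap is in your lower bound for sub-ballisticity. You propose to ``exhibit, within each slab of depth $D$, a single edge of very small conductance whose removal disconnects the entry and exit points.'' Because of the self-intersections of $X^{(0)}$, no such cut edge need exist: a deep back-track in the $\ell^{(1)}$-coordinate can be short-circuited by the trace returning close to itself, so the potential valley seen by the one-dimensional projection need not be present in the graph $\mathcal{G}^{(0)}$ at all. This is exactly the difficulty flagged in the text (back-tracking does not automatically create a trap), and the resolution is not to look for a cut inside an arbitrary deep slab, but to define a more restrictive trap event in which $X^{(0)}$ is confined to three thin cylinders (out along $\delta^{(0)}$, back along the Doob-transform drift $\hat{\delta}^{(1)}=\mathbf{E}(\alpha_{(1)}^{-X^{(0)}_1\cdot\ell^{(1)}}X^{(0)}_1)$, then out again along $\delta^{(0)}$), so that the geometry itself rules out shortcuts between the entrance and exit of the valley. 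One must then show that imposing these geometric constraints does not change the exponential cost, i.e.\ that the constrained event of depth $h$ still has probability of order $\alpha_{(1)}^{-h}$ rather than some strictly smaller exponential rate; this is where the choice of $\hat{\delta}^{(1)}$ as the drift of the tilted walk and Mogul$'$ski\u{\i}'s large deviations principle enter. Without this step (or some substitute showing that un-short-circuited valleys of depth $h$ occur at rate $\alpha_{(1)}^{-h}$), your tail lower bound on $T_2-T_1$, and hence sub-ballisticity for $\beta_{(1)}>\alpha_{(1)}$, does not follow.
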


In the proof of this result, not only are we able to explicitly characterise $\alpha_{(1)}$, but we are also able to describe the structure of traps that lead to sub-ballisticity. Concerning the definition of $\alpha_{(1)}$, first let
\[\varphi_{(1)}(t)=\mathbf{E}\left[\exp\{-tX^{(0)}_1\cdot \ell^{(1)}\}\right]\]
be the moment generating function of $-X^{(0)}_1\cdot \ell^{(1)}$. Under the assumption that $\delta^{(0)}\cdot\ell^{(1)}>0$, basic properties of such a function yields that there is a unique strictly positive solution $t_{(1)}$ to $\varphi_{(1)}(t)=1$, and we set
\[\alpha_{(1)}=\exp\{t_{(1)}\}.\]
The importance of this quantity to our story arises from its alternative description in terms of the back-tracking exponent for $X^{(0)}$ in the direction $\ell^{(1)}$, i.e.
\begin{equation}\label{backtrack}
-x^{-1}\log\mathbf{P}\left(\min_nX^{(0)}_n\cdot \ell^{(1)}\leq -x\right)\rightarrow\log \alpha_{(1)}.
\end{equation}
In particular, similarly to what is known to be the case in the Galton-Watson tree and percolation settings, one would expect that if $X^{(0)}_n\cdot \ell^{(1)}$ back-tracks a distance $h$, then this will create a trap that takes time $\beta_{(1)}^h$ to escape from. As a result, the tail of the escape time from the trap created at 0 will behave approximately like
\begin{equation}\label{tail}
\mathbf{P}\left(\beta_{(1)}^{-\min_nX^{(0)}_n\cdot \ell^{(1)}}\geq t\right)\approx t^{-\frac{\log\alpha_{(1)}}{\log\beta_{(1)}}}.
\end{equation}
Moreover, standard regeneration arguments allow us to show that traps arise in certain appropriately-chosen sections of  $\mathcal{G}^{(0)}$ (of finite expected length) in an i.i.d.\ fashion. Putting these observations together, we arrive at the criteria for ballisticity being that $\beta_{(1)}^{-\min_nX^{(0)}_n\cdot \ell^{(1)}}$ has a first moment, and thus that $\log\alpha_{(1)}/\log\beta_{(1)}>1$. (We exclude the boundary case from consideration, since this requires a more careful argument that was not pursued in \cite{CH}.)

To make the previous argument rigourous, an important step is to show that back-tracking actually leads to the creation of traps, for self-intersections of $X^{(0)}$ mean the former effect does not automatically lead to the latter. Specifically, in \cite{CH}, it was considered whether one might see sections of the environment of the following form for some suitably-chosen vector $\hat{\delta}^{(1)}$.
\begin{center}
\scalebox{0.72}{
\includegraphics[scale=0.46]{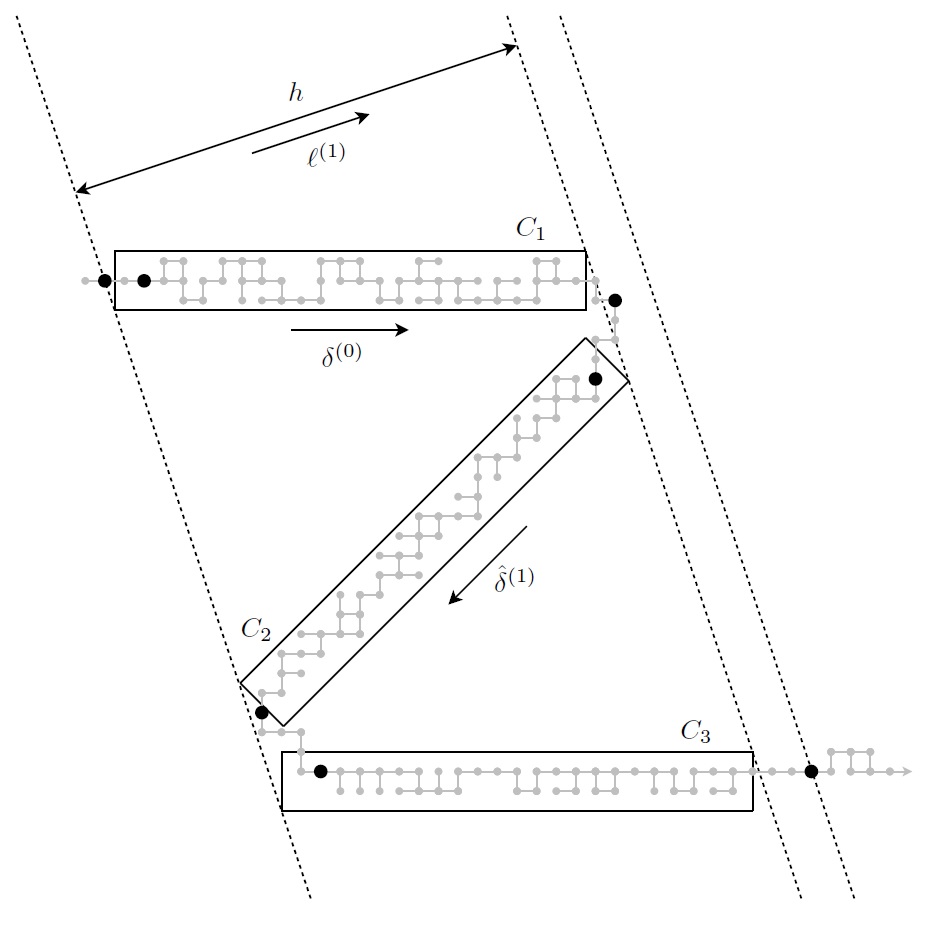}}
\end{center}
In particular, it is being asked that $X^{(0)}$ moves inside a thin cylinder $C_1$ in the direction $\delta^{(0)}$ until it has increased its $\ell^{(1)}$ coordinate by $h$, it then moves inside a thin cylinder $C_2$ in a direction $\hat{\delta}^{(1)}$ until it has decreased its $\ell^{(1)}$ coordinate by $h$, and finally, it moves inside a thin cylinder $C_3$ in the direction $\delta^{(0)}$ until it has again increased its $\ell^{(1)}$ coordinate by $h$. Of course, this event implies that $X^{(0)}\cdot\ell^{(1)}$ contains a back-tracking section of size $h$, and the question becomes whether it is possible to choose $\hat{\delta}^{(1)}$ so that the probability of the event behaves as at \eqref{backtrack}. It transpires this is possible, with the least costly direction to take being:
\[\hat{\delta}^{(1)}:=\mathbf{E}\left(\alpha_{(1)}^{-X_1^{(0)}\cdot\ell^{(1)}}X_1^{(0)}\right).\]
Intuition for this choice is that it means $\hat{\delta}^{(1)}$ is the drift of the Markov process with transition probabilities given by the `Doob transform'
\[\hat{p}^{(0)}(y-x)=\frac{{p}^{(0)}(y-x)h^{(1)}(y)}{h^{(1)}(x)},\]
where $h^{(1)}$ is the $X^{(0)}$-harmonic function given by $h^{(1)}(x):=\alpha_{(1)}^{-x\cdot\ell^{(i)}}$. Roughly speaking, this is the law of $X^{(0)}$ conditioned so that $X^{(0)}_n\cdot \ell^{(1)}\rightarrow-\infty$ (cf. \cite[Section 17.6]{LPW}). To make the argument rigourous, a classic large deviations principle of Mogul$'$ski\u\i\:\cite{Mogulskii} was applied.

We finish the section with an example to illustrate the transparency of the criteria of Theorem \ref{subbthm}. Suppose that for $i\in\{0,1\}$ there exists a $k_i\in\{1,2,\dots,d\}$ and $\gamma_i>1$ such that
\[\mathbf{p}^{(i)}(e)= \frac{1+(\gamma_i-1) \mathbf{1}_{e\in\{e_1,\dots,e_{k_i}\}}}{2d+k_i(\gamma_i-1)}.\]
We then have that $X^{(1)}$ is ballistic if $k_1(\gamma_1-1)<(k_1\wedge k_0)(\gamma_0-1)$, and sub-ballistic if the reverse (strict) inequality is true. In particular, if $k_1\leq k_0$, then the condition simplifies further to $\gamma_1<\gamma_0$.

\subsection{Conjecture}\label{conjsec}

Towards conjecturing further detail concerning the behaviour of $X^{(1)}$, we start by noting that if we observe $X^{(1)}$ at the cut-points of $\mathcal{G}^{(0)}$ (i.e.\ vertices that, when removed, disconnect $\mathcal{G}^{(0)}$), then we obtain a random walk in a one-dimensional random environment. Similarly to what is seen in the model of \cite{C2}, as discussed in Subsection \ref{rworwsec} above, the potential of this walk should essentially behave like
\[-\log(\beta_{(1)})\left(X^{(0)}\cdot\ell^{(1)}\right),\]
and so $X^{(1)}\cdot\delta^{(0)}$ should behave as per a one-dimensional random walk with a similar potential. In particular, comparing with the known results for independent and identically-distributed one-dimensional random environments \cite{ESZ,KKS} (see also \cite{ESTZ}), a key parameter for determining the behaviour of  $X^{(1)}$ is likely to be
\[\kappa_{(1)}:=\frac{\log\alpha_{(1)}}{\log\beta_{(1)}},\]
which we already met in the tail estimate at \eqref{tail}. Indeed, we would then expect (modulo a caveat raised below) that: if $\kappa_{(1)}>2$, then
\[\frac{\left(X_n^{(1)}-nv^{(1)}\right)\cdot\delta^{(0)}}{\sqrt{n}}\]
will converge in distribution to a (non-trivial) Gaussian random variable; if $\kappa_{(1)}\in(1,2)$, then
\[\frac{\left(X_n^{(1)}-nv^{(1)}\right)\cdot\delta^{(0)}}{{n}^{1/\kappa_{(1)}}}\]
will converge in distribution to a completely asymmetric stable zero mean random variable of index $\kappa_{(1)}$; if $\kappa_{(1)}\in(0,1)$, then
\[\frac{X_n^{(1)}\cdot\delta^{(0)}}{{n}^{\kappa_{(1)}}}\]
will converge in distribution to the inverse of a power of a completely asymmetric stable zero mean random variable of index $\kappa_{(1)}$; and, moreover in the boundary cases $\kappa_{(1)}=1,2$, the behaviour of $X^{(1)}\cdot\delta^{(0)}$ should also match that of the corresponding one-dimensional random walk in random environment. The one additional subtlety that could arise for the BRWBRW is that there will potentially be a lattice effect if $\ell^{(1)}$ has rational coordinates (cf.\ a similar issue in \cite{BFGH} and also \cite{Bowditch}). In this case, the scaling exponents could be expected to remain the same, but convergence might only occur subsequentially, with a modified limiting distribution.

\setlength{\bibsep}{0pt plus 0.3ex}
\footnotesize{
\bibliography{rimsproc}
\bibliographystyle{amsplain}
}

\end{document}